\def \zpizq{\left(}
\def \zcizq{\left[}
\def \zpder{\right)}
\def \zcder{\right]}
\def \za{\alpha}
\def \zb{\beta}
\def \zg{\gamma}
\def \zd{\delta}
\def \ze{\varepsilon}
\def \zh{\theta}
\def \zl{\lambda}
\def \zm{\mu}
\def \zp{\pi}
\def \zr{\rho}
\def \zf{\varphi}
\def \zq{\psi}
\def \zw{\omega}
\def \zF{\Phi}
\def \zQ{\Psi}
\def \zlma{\ell}
\def \zsu{\sum}
\def \zin{\cap}
\def \zing{\bigcap}
\def \zun{\cup}
\def \zung{\bigcup}
\def \zpu{\cdot}
\def \zpor{\times}
\def \zci{\circ}
\def \zmai{\geq}
\def \zco{\subset}
\def \zpe{\in}
\def \zeq{\equiv}
\def \znoi{\neq}
\def \znope{\not\in}
\def \zpar{\partial}
\def \zinf{\infty}
\def \zva{\emptyset}
\def \zfl{\rightarrow}
\def \zdbv{\parallel}
\def \z/{\over}
\newcommand {\CC}{\mathbb C}
\newcommand {\RR}{\mathbb R}
\newcommand {\ZZ}{\mathbb Z}
\newcommand {\NN}{\mathbb N}
\newcommand {\A}{\mathcal A}
\newcommand {\E}{\mathcal E}
\newcommand {\G}{\mathcal G}
\newcommand {\T}{\mathcal T}
\newcommand {\Zb}{\mathsf Z}
\newcommand {\ib}{\mathsf i}
\newcommand {\menos}{\setminus}
\newcommand {\co}{\colon}
\def\mylabel#1{\label{#1}}   
\newtheorem{theorem}{Theorem}[section]
\newtheorem*{theorem*}{Theorem} 
\newtheorem{lemma}[theorem]{Lemma}
\newtheorem{corollary}[theorem]{Corollary} 
\newtheorem*{corollary*}{Corollary}
\newtheorem{proposition}[theorem]{Proposition}  
\newtheorem{remark}[theorem]{Remark} 
\newtheorem{example}[theorem]{Example}  
\newtheorem*{example*}{Example}
\title{Primary singularities of vector fields on surfaces}
\author{M.W. Hirsch}
\address[M.W.~Hirsch]
{Department of Mathematics University of Wisconsin at Madison, 
University of California at Berkeley} 
\email[M.W.~Hirsch]{mwhirsch@chorus.net}
\author{F.J.~Turiel}
\address[F.J.~Turiel]
{Geometr{\'\i}a y Topolog{\'\i}a,
Facultad de Ciencias,
Campus de Teatinos, s/n,
29071-M{\'a}laga, Spain}
\email[F.J.~Turiel]{turiel@uma.es}
\thanks{}
\begin{document}

\begin{abstract}
Unless another thing is stated one works in the $C^\infty$ category and manifolds have
empty boundary. Let $X$ and $Y$ be vector fields on a manifold $M$. We say 
that $Y$ tracks $X$ if $[Y,X]=fX$ for some continuous function
$f\colon M\rightarrow\mathbb R$. A subset $K$ of the zero set ${\mathsf Z}(X)$  
is an essential block for $X$ if it is non-empty, compact, open 
in ${\mathsf Z}(X)$ and its Poincar\'e-Hopf index does not vanishes.
One says that $X$ is non-flat at $p$ if its $\infty$-jet at $p$ is non-trivial. 
A point $p$ of ${\mathsf Z}(X)$ is called a primary singularity of $X$ if any
vector field  defined about $p$ and tracking $X$ vanishes at $p$.
This is our main result: Consider an essential block $K$ of a vector 
field $X$ defined on a surface $M$. Assume that $X$ is non-flat at every point of $K$.
Then $K$ contains a primary singularity of $X$.
As a consequence, if $M$ is a compact surface with non-zero characteristic and 
$X$ is nowhere flat, then there exists a primary singularity of $X$.
\end{abstract}

\maketitle

\tableofcontents

\section{Introduction} \mylabel{secIn}
Whether a family of vector fields has a common singularity is a classical issue in 
dynamical systems. For instance, on a compact surface with non-vanishing
Euler characteristic there always exists a common zero provided that the vector
fields commute (Lima \cite{Lima64}) or if they span a finite-dimensional nilpotent
Lie algebra (Plante \cite{Plante86}).
On the existence of a common singularity for a family of commuting vector
fields in dimension $\zmai 3$ several interesting results are due to
Bonatti \cite{Bonatti92} (analitic in dimension $3$ and $4$) and
Bonatti \& De Santiago \cite{Bon-Sant2015} (dimension $3$).
For a complementary discussion on the existence of a common zero the reader 
is referred to the introduction of \cite{HiTu}.

In this paper one shows that on surfaces every essential block of a nowhere
flat vector field $X$ includes a point at which all vector fields tracking $X$ vanish
(see Theorem \ref{the-1} below). 

Throughout this work manifolds (without boundary) 
and their associated objects are real $C^\zinf$ unless 
another thing is stated. Consider a tensor $\T$ on a manifold $P$. Given $p\zpe P$
the {\em principal part} of $\T$ at $p$ means $j_p^n \T$ if $j_p^{n-1} \T=0$ but 
$j_p^n \T\znoi 0$, or zero if $j_p^\zinf \T=0$. The {\em order} of $\T$ at $p$  
 is $n$ in  the first case  and $\zinf$ in the second one.
One will say that $\T$ is {\em flat at} $p$ if its order at this point equals $\zinf$, and
{\em non-flat} otherwise. 
 
In coordinates about $p$ the principal part is identified to the first significant term of the 
Taylor expansion of $\T$ at $p$. Given a function $f$ such that $f(p)\znoi 0$, the 
principal part  of $f\T$ at $p$ equals that of $\T$ multiplied by $f(p)$.

$\Zb(\T)$ denotes the set of zeros of $\T$ and $\Zb_n (\T)$, where $n\zpe\NN'$
and $\NN':=\NN\zun\{\zinf\}$, the set of zeros of order $n$. (Here $\NN$ is  the
set of positive integers.) Notice that
$\Zb(\T)=\zung_{k\zpe\NN'} \Zb_n (\T)$ where the union is disjoint.

Consider a vector field $Y$ on $P$. $Y$ {\em tracks} $\T$ provided
$L_Y \T=f\T$ for some continuous function $f\co P\rightarrow\RR$, referred to as 
the {\em tracking function}. (When $\T$ is also a vector field this means
$[Y,\T]=f\T$.) A set  $\A$ of vector fields on $P$ tracks $\T$ provided each 
element of  $\A$ tracks $X$.

A point $p\zpe\Zb(\T)$ is a {\em primary singularity} of $\T$ if every vector field 
defined about $p$ that tracks $\T$ vanishes at $p$. Obviously isolated singularities
are primary. The notion of primary singularity is the fundamental new
concept of this work.

Let $X$ be a vector field on $P$. Consider an open set $U$ of $P$ with compact 
closure $\overline U$ such that $\Zb(X)\zin(\overline U\menos U)=\zva$. 
The {\em index} of $X$ on $U$, denoted by ${\ib}(X,U) \zpe\ZZ$, is defined as  
the Poincar\'e-Hopf index of any sufficiently close approximation $X'$ to $X|\overline U$ 
(in the compact open topology) such that 
$\Zb(X')$ is finite. Equivalently: ${\ib}(X, U)$ is the intersection number of $X|U$
with the zero section of the tangent bundle ({\sc Bonatti} \cite{Bonatti92}).  
This number is independent of the approximation, and is stable under perturbation 
of $X$ and replacement of $U$ by smaller open sets containing ${\Zb}(X)\zin U$.

A compact set $K\zco \Zb(X)$ is a {\em block} of zeros for $X$ (or an
{\em $X$-block}) provided $K$ is non-empty and relatively open in $\Zb(X)$,
that is to say provided  $K$ is non-empty and $\Zb(X)\menos K$ is closed in $P$.
Observe that a non-empty compact $K\zco\Zb(X)$  
is a $X$-block if and only if it has a precompact open neighborhood
$U\subset P$, called {\em isolating} for $(X,K)$, such that $\Zb(X)\zin\overline U=K$
(manifolds are normal spaces). This implies $\ib(X,U)$ is determined by $X$ and $K$, 
and does not depend on the choice of $U$.
The {\em index of $X$ at $K$} is $\ib_K (X):=\ib(X,U)$. The
$X$-block $K$ is {\em essential} provided $\ib_K (X)\ne 0$, which
implies $K\ne\varnothing$, and {\em inessential} otherwise.

If $P$ is compact, it is isolating for every vector field on $P$ and its set of zeros. 
Therefore, in this case, ${\ib}_{\Zb(X)}(X)=\ib(X, P) =\chi (P)$.

This is our main result, which will be proved in the Section \ref{secPr}.

\begin{theorem}\mylabel{the-1}
Consider an essential block $K$ of a vector field $X$ defined on a surface $M$.
Assume that $X$ is non-flat at every point of $K$. Then $K$ contains a primary
singularity of $X$.
\end{theorem}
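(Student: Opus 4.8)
The plan is to argue by contradiction: suppose $K$ is an essential $X$-block on the surface $M$, with $X$ non-flat at every point of $K$, yet $K$ contains no primary singularity. Then for each $p\in K$ there is a vector field $Y_p$ defined on a neighborhood of $p$ tracking $X$ with $Y_p(p)\ne 0$. The first step is to pass from this local data to a single vector field, or a controlled finite collection, tracking $X$ on a neighborhood of $K$ and nowhere zero on $K$. One covers $K$ by finitely many such neighborhoods $U_1,\dots,U_m$ with nonvanishing trackers $Y_1,\dots,Y_m$; the subtlety is that a convex combination $\sum\rho_i Y_i$ need not track $X$, and need not be nonzero. So instead I would work block by block: using that $\ib_K(X)\ne 0$, and additivity of the index over a partition of $K$ into smaller $X$-blocks subordinate to the cover, at least one sub-block $K'\subset U_1$ (say) is again essential; on $U_1$ we then have a genuine nonvanishing vector field $Y_1$ tracking $X$, with $\Zb(X)\cap U_1\supseteq K'$ essential.

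The second, central step is the local analysis on $U_1$ of a nonvanishing $Y:=Y_1$ tracking $X$, with $X$ non-flat along $\Zb(X)\cap U_1$. Since $Y$ is nonvanishing we may choose coordinates $(x,y)$ on (a possibly shrunk) $U_1$ in which $Y=\partial/\partial x$. The tracking relation $[Y,X]=fX$ then becomes the linear first-order system $\partial X/\partial x = f\,X$ for the components of $X=a(x,y)\,\partial_x+b(x,y)\,\partial_y$, i.e. $\partial_x a = fa$, $\partial_x b = fb$. Hence along each flow line $y=\text{const}$ the two components $a,b$ satisfy the same scalar linear ODE, so the ratio $a:b$ is constant along $Y$-orbits and, crucially, the zero set $\Zb(X)\cap U_1$ is a union of $Y$-orbit segments (flow-invariant), and the order of $X$ is constant along each such orbit. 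Because $X$ is non-flat on $K'$, writing $g(x,y)=\exp\!\big(\int_0^x f\,\big)$ we get $X = g(x,y)\,X_0(y)\,$ along the orbit, where $X_0(y)=(a(0,y),b(0,y))$; thus $\Zb(X)\cap U_1 = \{g\ne 0\}\cap\{X_0=0\}\times\mathbb R_x$ is, near $K'$, a $1$-manifold (a union of plaques $\{y=\text{const}\}$) or empty.

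The third step extracts the contradiction from the index. A nonvanishing $Y$ tracking $X$ forces $\Zb(X)$, near $K'$, to be foliated by $Y$-orbits as above; a compact essential block cannot be a finite union of arcs and still carry nonzero Poincaré–Hopf index, because near such a "curve of zeros" $X$ factors as $g\cdot X_0$ with $g$ nonvanishing and $X_0$ depending only on the transverse variable $y$, making $X$ homotopic through nonvanishing fields (on a punctured isolating neighborhood) to a field whose index is $0$. Concretely, one computes $\ib(X,U')$ for a small isolating $U'$ of $K'$: the factorization $X=g\,X_0(y)\,\partial$-combination reduces the intersection number with the zero section to that of the $1$-variable field $y\mapsto X_0(y)$, which vanishes. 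This contradicts $\ib_{K'}(X)\ne 0$.

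The main obstacle I anticipate is the \emph{globalization / patching} in the first step: different local trackers $Y_p$ need not agree and cannot be averaged, so one must genuinely exploit additivity of the index to isolate an essential sub-block lying inside a single coordinate chart where one nonvanishing tracker is available — and one must be careful that shrinking $U_1$ to straighten $Y$ does not destroy the isolating property for the sub-block. A secondary technical point is justifying the regularity and the factorization $X=g\cdot X_0$ when $f$ is only continuous (so $g$ is $C^1$ in $x$ but only continuous in $y$); here the non-flatness hypothesis is what prevents the limiting profile $X_0(y)$ from degenerating and keeps the index computation honest. Once the local picture "$\Zb(X)$ is a union of $Y$-orbits along which $X$ has constant nonzero principal part" is established, the vanishing of the index is a direct homotopy argument.
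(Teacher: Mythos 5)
Your local analysis in the second step is essentially the right picture (it is the content of the paper's Lemma \ref{lem-2} and Proposition \ref{pro-2}: near a point where some tracker is nonzero, $X=x_2^{n}e^{\varphi}\bigl(\tilde h_1\partial_{x_1}+\tilde h_2\partial_{x_2}\bigr)$ with $(\tilde h_1,\tilde h_2)$ nonvanishing, so $\Zb(X)$ is locally a regular curve of $Y$-plaques along which the order is constant). But your first step contains a genuine gap that the rest of the argument cannot survive. A block is by definition compact and relatively open in $\Zb(X)$, so a \emph{connected} block admits no partition into smaller blocks: once you know $K$ is a compact $1$-manifold, its sub-blocks are unions of its circle components, and a single circle $K\cong S^1$ has no proper nonempty sub-block. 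There is therefore no way to use ``additivity of the index over a partition subordinate to the cover'' to land an essential sub-block inside one chart $U_1$ carrying a single nonvanishing tracker; the circle of zeros generically meets every chart in a proper arc, which is not a block and has no index. Your plan to reduce to one straightened tracker $Y=\partial/\partial x$ on an isolating neighborhood of an essential sub-block simply cannot be carried out.

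This localization failure also infects your third step. The vanishing of $\ib_K(X)$ for a circle of zeros with local normal form $x_2^{n}\cdot(\text{nonvanishing})$ is a \emph{global} statement: one must produce a nowhere-zero perturbation of $X$ on a whole isolating neighborhood of $K$, and the obvious perturbation direction is the line field orthogonal to $X$, which is canonically defined near $K$ but may be non-orientable along the circle. The paper handles exactly this: it glues the local normal forms into a line subbundle $\E\subset TM$ orthogonal to $X$, kills the index by the perturbation $X+\delta\varphi V$ when $\E$ is trivial, and otherwise passes to the $2$-fold cover trivializing $\E$, where the index doubles and the previous case applies. Your homotopy ``reducing to the one-variable field $X_0(y)$'' is a chart-by-chart statement and does not address this orientability obstruction, which is precisely where the real work of the theorem lies. (Your secondary worry about the regularity of $X_0$ when $f$ is merely continuous is legitimate but repairable along the lines of Lemma \ref{lem-2}; the localization and the global line-field issue are the essential defects.)
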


As a straightforward consequence:

\begin{corollary}\mylabel{cor-1}
On a compact connected surface $M$ with $\chi(M)\znoi0$
consider a vector field $X$. Assume that $X$ is nowhere flat. Then there exists
a primary singularity of $X$. 
\end{corollary}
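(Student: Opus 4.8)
The plan is to obtain the Corollary as an immediate application of Theorem \ref{the-1}; the only real task is to manufacture an essential block out of the global hypotheses on $M$. The natural candidate is the entire zero set itself. First I would set $K := \Zb(X)$ and verify that it meets every requirement of the theorem: that it is a block, that it is essential, and that $X$ is non-flat at each of its points.

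Since $M$ is compact, $\Zb(X)$ is closed in $M$ and hence compact, and it is trivially relatively open in $\Zb(X)$; thus $K$ is an $X$-block as soon as it is non-empty. To see that it is non-empty and in fact essential, I would invoke the computation already recorded in the excerpt: because $M$ is compact it is isolating for $X$ and its zero set, so that
\[
\ib_K(X) = \ib_{\Zb(X)}(X) = \ib(X, M) = \chi(M).
\]
By hypothesis $\chi(M) \znoi 0$, whence $\ib_K(X) \znoi 0$. This forces $K \znoi \zva$ (an empty zero set would be approximated by $X$ itself and give index $0$) and simultaneously certifies that $K$ is an essential block.

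It then remains only to match the non-flatness hypothesis, which is immediate: since $X$ is assumed nowhere flat on $M$, it is in particular non-flat at every point of $K \zco M$. Therefore $K$ is an essential block of $X$ on the surface $M$ at every point of which $X$ is non-flat, and Theorem \ref{the-1} applies verbatim to produce a primary singularity of $X$ lying inside $K \zcoi \Zb(X)$, which is exactly the assertion of the Corollary.

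The deduction carries no genuine obstacle: all the analytic content sits in Theorem \ref{the-1}, and the sole role of the Euler characteristic is to guarantee, through the Poincar\'e--Hopf identification $\ib(X,M)=\chi(M)$, that the whole zero set is essential. The one point deserving a word is the passage from $\ib_K(X) \znoi 0$ to $K \znoi \zva$, but this is clear since the index of $X$ over a region disjoint from $\Zb(X)$ vanishes. Connectedness of $M$ plays no part in the argument and is simply inherited from the statement.
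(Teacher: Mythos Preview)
Your proposal is correct and follows exactly the route the paper intends: the authors simply label the corollary ``a straightforward consequence'' of Theorem~\ref{the-1}, and the only missing link is the identification $\ib_{\Zb(X)}(X)=\ib(X,M)=\chi(M)$ for compact $M$, which they have already recorded in the introduction and which you invoke verbatim. There is nothing to add.
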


Moreover, four examples illustrating these results are given in Section \ref{secEx}.

\begin{remark}\mylabel{rem-2} {\rm $\,$

{\bf (a)} The hypothesis on the non-flatness of Theorem \ref{the-1} and 
Corollary \ref{cor-1} cannot be omitted as the following example shows.
On $S^2 \zco \RR^3$ consider the vector field 
$X=\zf(x_3 )(-x_2 \zpar/\zpar x_1 +x_1 \zpar/\zpar x_2 )$ where $\zf(0)=1$
and $\zf(\RR\setminus(-1/2,1/2))=0$. Then the vector fields
$Y=-x_2 \zpar/\zpar x_1 +x_1 \zpar/\zpar x_2$ and
$V=\zq(x_3 )(-x_3 \zpar/\zpar x_1 +x_1 \zpar/\zpar x_3 )$ where
$\zq(1)=\zq(-1)=1$ and $\zq([-3/4,3/4])=0$ track $X$ and
$\Zb(Y)\zin\Zb(V)=\zva$. Therefore $X$ has no primary singularity. 

{\bf (b)} Two particular cases of Theorem \ref{the-1} were already known,
namely: if $X$ and $K$ are as in the foregoing theorem and $\G$ is a finite-dimensional 
Lie algebra of vector fields on $M$  
that tracks $X$, then the the elements of $\G$ have a common
singularity in $K$ provided that $\G$ is supersolvable (Theorem 1.4 of \cite{HiGD})
or $\G$ and $X$ are analytic (real case of Theorem 1.1 of \cite{HiTu}). Thus these 
two results are generalized here.}
\end{remark}

For general questions on Differential Geometry readers are referred to \cite{KN}, and for
those on Differential Topology to \cite{HR}.

\section{Other results} \mylabel{secRe}
One will need:

\begin{lemma}\mylabel{lem-1}
On a manifold $P$ of dimension $m\zmai 1$ consider a vector field $X$ of finite order 
$n\zmai 1$  at a point $p$. Then for almost every $v\zpe T_p P$ there exists a vector 
field $U$ defined around  $p$ such that $U(p)=v$ and the $n$-times iterated bracket 
$[U,[U,\dots[U,X]\dots]]$ does not vanish at $p$.  
\end{lemma}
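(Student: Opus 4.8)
The plan is to work in local coordinates $(x_1,\dots,x_m)$ centered at $p$ and to reduce the statement to a nonvanishing condition on a single homogeneous polynomial map, after which the genericity conclusion follows from the fact that a nonzero polynomial does not vanish on a set of full measure.

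First I would record the effect of bracketing with a constant-coefficient vector field. Write $X$ near $p$ as $X=\sum_i a_i(x)\,\zpar/\zpar x_i$, and let $P_i$ be the principal part (degree-$n$ homogeneous) of $a_i$ at $p$, not all zero since $X$ has finite order $n$. For a vector $v\zpe T_pP$ take the constant vector field $U=U_v:=\sum_j v_j\,\zpar/\zpar x_j$. Then $[U_v,X]=-\sum_i (U_v a_i)\,\zpar/\zpar x_i$, so bracketing once with $U_v$ is, on principal parts, exactly applying the directional derivative $D_v:=\sum_j v_j\,\zpar/\zpar x_j$ (an operator that lowers homogeneous degree by one). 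Iterating, the $n$-times iterated bracket $\mathrm{ad}_{U_v}^n X=[U_v,[U_v,\dots[U_v,X]\dots]]$ has components $\pm D_v^n a_i$, and since $D_v^n$ kills every monomial of degree $<n$ and sends a degree-$n$ homogeneous polynomial to the constant $n!\,P_i(v)$ (by Euler/Taylor), we get
\[
\bigl(\mathrm{ad}_{U_v}^n X\bigr)(p)=\pm\,n!\,\sum_i P_i(v)\,\frac{\zpar}{\zpar x_i}\Big|_p .
\]
Hence this vector vanishes at $p$ if and only if $P_i(v)=0$ for all $i$, i.e. $v$ lies in the common zero set $V:=\{v:P_1(v)=\dots=P_m(v)=0\}$ of the principal parts.

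Next, since the $P_i$ are not all identically zero, at least one $P_{i_0}$ is a nonzero polynomial on $\RR^m\cong T_pP$; its zero set has Lebesgue measure zero, so $V$ has measure zero and for almost every $v$ the constant field $U=U_v$ already does the job. The only remaining subtlety is that the lemma asks for a genuine vector field $U$ with $U(p)=v$ but does not require $U$ to be constant — so the constant-coefficient field $U_v$ in the chart (transported to $P$ by any bump-function cutoff, which does not affect jets at $p$) is a perfectly good witness, and in fact the computation shows that for these generic $v$ every $U$ with the same $1$-jet works, while for $v\zpe V$ the constant field fails. This does leave open whether some non-constant $U$ with $U(p)=v\zpe V$ could succeed, but the lemma only claims the generic statement, so I would not pursue that.

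I expect there is essentially no serious obstacle here; the one point demanding a little care is the bookkeeping showing that lower-order terms of the $a_i$ (degrees $>n$, and the degree-$n$ part contributes a constant while higher-degree parts contribute terms vanishing at $p$) contribute nothing to the value at $p$ after $n$ differentiations and evaluation — this is just the statement that $D_v^n$ applied to a function with vanishing $(n-1)$-jet, evaluated at the origin, extracts exactly $n!$ times the value at $v$ of the degree-$n$ homogeneous part. I would also remark that the genericity can be upgraded: $V$ is a proper real algebraic subset, so its complement is not merely full-measure but open dense, though the measure statement is all that is needed downstream.
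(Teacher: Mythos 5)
Your argument is correct and is essentially the paper's own proof: reduce to the degree-$n$ homogeneous principal part, bracket $n$ times with the constant field $U_v$, and observe that the value at $p$ is $\pm n!$ times the principal part evaluated at $v$, which is nonzero off a proper algebraic (hence Lebesgue-null) subset of $T_pP$. (One immaterial slip: for constant $U_v$ one has $[U_v,X]=+\sum_i(U_v a_i)\,\partial/\partial x_i$, not $-$, but your $\pm$ bookkeeping absorbs this.)
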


\begin{proof}
It suffices to prove the result for $0\zpe\RR^m$ and a non-vanishing $n$-homogeneous 
polynomial vector field $X=\zsu_{\zlma=1}^m Q_\zlma \zpar/\zpar x_\zlma$. Up to a 
change of the order of  the coordinates, we may suppose $Q_1 \znoi 0$.

Given $a=(a_1 ,\dots,a_m )\zpe\RR^m$ set 
$U_a \co=\zsu_{\zlma=1}^m a_\zlma \zpar/\zpar x_\zlma$.
It suffices to show that for almost any $a\zpe\RR^m -\{0\}$ one has
$(U_a \zpu\zpu\zpu U_a \zpu Q_1 )(0)\znoi 0$, which is equivalent to show that 
the restriction of $Q_1$ to the vector line spanned by $a$ does not vanish identically.
But this last assertion is obvious. 
\end{proof}

Given a vector field $V$ on a manifold $P$, a set $S\subset P$ is {\em $V$-invariant} 
if it contains the orbits under $V$ of its points.

\begin{proposition}\mylabel{pro-1}
Consider two vector fields $X,Y$ on a surface $M$. Assume that $Y$ tracks $X$ 
with tracking function $f$. Then each set $\Zb_n (X)$, $n\zpe\NN'$, is
$Y$-invariant. 

Moreover $f$ is differentiable on the open set
$$[M\menos\Zb(X)]\zun[(\Zb(X)\menos\Zb_\zinf (X))\zin(M\menos\Zb(Y))].$$ 
\end{proposition}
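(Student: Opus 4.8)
The plan is to prove the two assertions separately, both by working in local coordinates and examining the jet of $X$.

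For the $Y$-invariance of each $\Zb_n(X)$: fix $p\zpe\Zb_n(X)$ and let $\zf_t$ be the local flow of $Y$. Since $Y$ tracks $X$, the Lie derivative $L_Y X=[Y,X]=fX$ governs the evolution of $X$ along the flow; informally, $\frac{d}{dt}(\zf_{-t})_* X = (\zf_{-t})_*[Y,X] = (f\zci\zf_{-t})\,(\zf_{-t})_* X$, so $(\zf_{-t})_* X = g(t,\zpu)\,X$ for a scalar function $g$ obtained by integrating $f$ along orbits (here one must be slightly careful because $f$ is only continuous, but the identity $(\zf_{-t})_*X = g(t)X$ on a neighborhood where $X\ne 0$ extends by continuity, and on $\Zb(X)$ it is trivial). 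Consequently $X$ and its pushforward under the flow differ by a nonvanishing scalar multiple near $p$, hence have the same order at corresponding points: $j^k_p X = 0 \iff j^k_{\zf_t(p)} X = 0$. This shows $\zf_t(p)\zpe\Zb_n(X)$, i.e. $\Zb_n(X)$ is $Y$-invariant. An alternative, purely algebraic route avoiding the flow: show directly that $j^{n-1}_p X=0$ implies $j^{n-1}_p[Y,X]=0$ and that $j^n_p[Y,X]$ is the Lie derivative of the principal part $j^n_p X$ along the linear part of $Y$, which is an isomorphism-type operation preserving nonvanishing; then $f\cdot X$ having order exactly $n$ at $p$ (as $f$ is continuous, $f(p)$ may be anything) forces $j^{n-1}_p(fX)=f(p)\,j^{n-1}_pX=0$ automatically, and matching $n$-jets shows the order is preserved along $Y$. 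The flow argument is cleaner, so I would use that.

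For the differentiability of $f$: on the open set $M\menos\Zb(X)$ this is immediate, since there $X$ is a nonvanishing smooth vector field, so locally $X=\zpar/\zpar x_1$ in suitable coordinates and $f$ is the first component of $[Y,X]$, manifestly smooth. The substantive case is a point $p\zpe\Zb_k(X)$ with $1\zmei k<\zinf$ and $Y(p)\znoi 0$. Here choose coordinates $(x_1,\dots,x_m)$ centered at $p$ with $Y=\zpar/\zpar x_1$. Write $X=\zsu_\zlma X_\zlma\,\zpar/\zpar x_\zlma$. The tracking equation $[Y,X]=fX$ becomes $\zpar X_\zlma/\zpar x_1 = f\,X_\zlma$ for every $\zlma$. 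Since $X$ has finite order $k$ at $p=0$, some coefficient, say $X_1$, has a nonzero $k$-jet at $0$; we must extract $f$ from $\zpar X_1/\zpar x_1 = f X_1$ near $0$, even at points where $X_1$ itself vanishes. The key point is that $X_1$ does not vanish identically on any segment in the $x_1$-direction through points near $0$ — because on such a segment $\zpar X_1/\zpar x_1 = f X_1$ is a linear ODE, so $X_1$ vanishing at one point of the segment forces it to vanish on the whole segment, contradicting that $j^k_0 X_1\znoi 0$ (which, after possibly rotating within the hyperplane $x_1=0$, persists along the $x_1$-axis and nearby lines by the reasoning in Lemma \ref{lem-1}). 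Hence, shrinking the neighborhood, the zero set of $X_1$ is contained in finitely many "bad" slices and off them $f=(\zpar X_1/\zpar x_1)/X_1$ is smooth; then continuity of $f$ together with a removable-singularity argument — $f$ agrees with the smooth function $(\zpar X_1/\zpar x_1)/X_1$ wherever $X_1\ne0$ and is continuous across — upgrades $f$ to a smooth function there. (One can make the removable-singularity step precise using a Taylor expansion of $X_1$ in $x_1$: if $X_1$ has a zero of multiplicity $r$ in $x_1$ at a point, then $\zpar X_1/\zpar x_1$ has a zero of multiplicity $r-1$, and the quotient is the smooth ratio of the reduced factors.)

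The main obstacle is precisely this last differentiability claim near $\Zb_k(X)\zin(M\menos\Zb(Y))$: one only knows $f$ is continuous a priori, and dividing $\zpar X_\zlma/\zpar x_1$ by $X_\zlma$ at common zeros requires controlling the orders of vanishing. The ODE observation (along $Y$-orbits $X_\zlma$ satisfies a first-order linear equation, so its $x_1$-zeros come in full segments or not at all) is what makes the quotient well-behaved; combined with the finite-order hypothesis it confines the degeneracy to a negligible set and lets continuity do the rest. I expect this to be where the real work lies; the $Y$-invariance half is comparatively formal once the flow identity $(\zf_{-t})_*X=g(t)X$ is established.
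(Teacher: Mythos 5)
Your first half (the $Y$-invariance of each $\Zb_n(X)$) is correct and takes a genuinely different route from the paper. Integrating the tracking relation along the flow gives $(\phi_{-t})_*X=e^{\Phi_t}X$ with $\Phi_t(q)=\int_0^tf(\phi_s(q))\,ds$, whence the order of $X$ is constant along $Y$-orbits. The paper instead proves two local lemmas (one for finite order, one for infinite order, each resting on a two-dimensional normal form) and then runs an open--closed connectedness argument along integral curves. Your argument is shorter, dimension-free, and treats $\Zb_\infty(X)$ and the $\Zb_n(X)$ uniformly; the one point to make precise is that $e^{\Phi_t}$ is only continuous, so ``same order'' must be read through the growth characterization $j^k_qW=0\iff W=O(|x-q|^{k+1})$, which is stable under multiplication by a continuous function bounded away from $0$ and $\infty$. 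That is a one-line fix.

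The differentiability half has a genuine gap exactly where you predicted the real work lies. Two concrete problems. First, ``the zero set of $X_1$ is contained in finitely many bad slices'' is unjustified: by your own ODE observation that zero set is $J_1\times S$ with $S=\{x_2:X_1(0,x_2)=0\}$ an a priori arbitrary closed set (possibly a Cantor set), and $j^k_0X_1\neq0$ by itself does not control the restriction of $X_1$ to the transversal, since that jet could sit in mixed derivatives. (It can be controlled, either via the paper's Lemma \ref{lem-1}, which chooses the transversal so that the $k$-fold bracket is nonzero and hence $X_\lambda|_{\{x_1=0\}}$ has order exactly $k$, or by exploiting $X_\lambda=h_\lambda(x_2)e^{\varphi}$ with $e^{\varphi}$ pinched between positive constants --- but your proposal supplies neither.) Second, and more seriously, continuity of $f$ plus smoothness off a closed set does not upgrade to smoothness ($|x|^{3/2}$ refutes that principle), and your parenthetical repair is internally inconsistent: you propose dividing by a zero of $X_1$ of multiplicity $r$ ``in $x_1$'', but you have just shown that zeros of $X_1$ in the $x_1$-direction are entire segments, so that multiplicity is $0$ or $\infty$. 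What is actually needed --- and what the paper's Lemma \ref{lem-2} establishes --- is the transversal factorization $X_\lambda=x_2^k\mu(x_1,x_2)$ with $\mu$ smooth and nonvanishing near $p$, obtained by a Hadamard/Taylor division in $x_2$ that kills the coefficients $\mu_0,\dots,\mu_{k-1}$ using continuity of the quotients $X_\lambda/x_2^r$; then $e^{\varphi}=\mu/\tilde h_\lambda$ is smooth and $f=\partial\varphi/\partial x_1$ follows. Without that normal form the division of $\partial X_1/\partial x_1$ by $X_1$ at common zeros is not controlled.
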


This result is a consequence of the following two lemmas.

\begin{lemma}\mylabel{lem-2}
Under the hypotheses of Proposition \ref{pro-1} consider $p\zpe\Zb_n (X)$, 
$n<\zinf$, such that $Y(p)\znoi 0$. One has:
\begin{enumerate}[label={\rm (\alph{*})}]
\item\mylabel{lem-2a}
$f$ is differentiable around $p$.
\item\mylabel{lem-2b}
Let $\zg\co(a,b)\zfl M$ be an integral curve of $Y$ with $\zg(t_0 )=p$ for some
$t_0 \zpe (a,b)$. Then there exists $\ze>0$ such that 
$\zg(t_0 -\ze,t_0 +\ze)\zco\Zb_n (X)$.
\end{enumerate}
\end{lemma}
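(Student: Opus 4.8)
The plan is to work in flow‑box coordinates for $Y$ and to exploit the fact that, in such coordinates, the tracking equation becomes a system of scalar linear ODEs along the orbits of $Y$ with the continuous coefficient $f$; uniqueness for such ODEs then does most of the work. Since $Y(p)\neq0$, choose coordinates $(t,x)$ near $p$ with $p=(0,0)$ and $Y=\partial/\partial t$. Writing $X=X^1\,\partial/\partial t+X^2\,\partial/\partial x$, the hypothesis $[Y,X]=fX$ reads $\partial_t X^i=f\,X^i$ for $i=1,2$. For each fixed $x$ this is a scalar linear ODE in $t$ with continuous coefficient $f(\cdot,x)$, so $X^i(t,x)=X^i(0,x)\,E(t,x)$, where $E(t,x):=\exp\!\big(\int_0^t f(s,x)\,ds\big)$ is continuous and everywhere positive. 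Because $X(p)=0$ we have $X^i(0,0)=0$, hence $X^i(t,0)\equiv0$; as the $t$‑axis is the image of $\gamma$ near $p$, this already shows $\gamma$ locally lies in $\Zb(X)$, which is the easy half of \ref{lem-2b}.

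Next I would pin down the order. Set $k:=\min_i\operatorname{ord}_0 X^i(0,\cdot)$; this is finite, for otherwise $X\equiv0$ near $p$, contradicting that $X$ has order $n<\zinf$ at $p$. By Hadamard's lemma, $X^i(0,x)=x^k h_i(x)$ with $h_i$ smooth and $(h_1(0),h_2(0))\neq0$, so on a neighbourhood of $p$
\[
X=g\,\widehat X,\qquad g(t,x):=x^k E(t,x),\qquad \widehat X:=h_1(x)\,\partial_t+h_2(x)\,\partial_x ,
\]
where $\widehat X$ is smooth, non‑vanishing near $p$, and independent of $t$. Choosing $j_0$ with $h_{j_0}(0)\neq0$ exhibits $g=X^{j_0}/h_{j_0}$ as a smooth function. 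The crucial point is then a division argument: $g$ is smooth and equals $x^k E$ with $E$ continuous and positive, so dividing by $x$ repeatedly ($k$ times) via Hadamard's lemma — legitimate since at each stage the quotient extends continuously across $\{x=0\}$, being $x^{k-j}E$ there — yields $g=x^k G$ with $G$ smooth; since $G=E$ off $\{x=0\}$ and both are continuous, $E=G$ is smooth.

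From here both conclusions come quickly. For \ref{lem-2a}: $f=\partial_t E/E=\partial_t\log E$ is smooth near $p$. For \ref{lem-2b}: since $\widehat X$ is non‑vanishing, the principal‑part rule under multiplication by a function that is nonzero at the point (recorded in the Introduction) gives $\operatorname{ord}_q X=\operatorname{ord}_q g$ for every $q=(t_1,0)$ near $p$; and $g=x^kG$ with $G(t,0)=E(t,0)>0$ vanishes identically on $\{x=0\}$ with $\partial_x^k g\neq0$ there, so all derivatives of $g$ of total order $<k$ vanish at $q$ while $\partial_x^k g(q)\neq0$, i.e. $\operatorname{ord}_q g=k$. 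Taking $q=p$ forces $k=n$; hence every point $\gamma(t_0+s)$ with $|s|$ small lies in $\Zb_n(X)$, and choosing $\ze>0$ so that $\gamma(t_0-\ze,t_0+\ze)$ stays in the coordinate neighbourhood finishes \ref{lem-2b}.

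The main obstacle is the regularity of $f$ at zeros of $X$: the naive formula $f=\partial_t X^i/X^i$ degenerates precisely on $\Zb(X)$. The device that resolves it is the observation that the ODE structure forces all components $X^i$ to be the one continuous positive function $E$ times $t$‑independent data; after that, the smoothness of $E$ is a pure division problem for a single smooth function rather than a regularity statement about $f$ itself, and the same computation simultaneously identifies $k$ with $n$. Everything remaining is routine bookkeeping with jets.
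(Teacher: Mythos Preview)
Your argument follows the same skeleton as the paper's: flow-box coordinates for $Y$, solve the scalar ODEs $\partial_t X^i=fX^i$ to obtain $X^i(t,x)=h_i(x)\,E(t,x)$ with $E=e^{\varphi}$ continuous and positive, factor $h_i(x)=x^k\tilde h_i(x)$, and then run a Taylor/Hadamard division on one smooth component $X^{j_0}=x^k(\tilde h_{j_0}E)$ to conclude that $E$ is smooth, hence $f=\partial_t\log E$ is smooth. The one substantive difference is how the exponent is identified. The paper invokes Lemma~\ref{lem-1} to manufacture an auxiliary field $U$ whose $n$-fold iterated bracket with $X$ does not vanish at $p$, and computes this bracket in coordinates adapted to both $Y$ and $U$ to read off $(h_1^{(n)}(0),h_2^{(n)}(0))\neq 0$ directly. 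You instead set $k$ equal to the minimal order of the $h_i$ at $0$, carry the smoothness argument through for that $k$, and only afterwards deduce $k=n$ from $\operatorname{ord}_p X=\operatorname{ord}_p g$. This bypasses Lemma~\ref{lem-1} entirely and is a genuine streamlining; the paper's route has the compensating virtue that $k=n$ is nailed down at the outset via a coordinate-free datum.

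There is, however, one slip. You claim $k<\infty$ because ``otherwise $X\equiv 0$ near $p$''. That implication is false: flatness of both one-variable functions $X^i(0,\cdot)$ at $0$ does not force $X$ to vanish identically (take $X^i(0,x)=e^{-1/x^2}$), since $E$ is only continuous. What you need, and what is true, is that $k=\infty$ forces $X$ to be \emph{flat} at $p$. The fix uses your own division mechanism: for every $N$ write $X^i(0,x)=x^N\psi_N(x)$ with $\psi_N$ smooth, so the smooth function $X^i$ is continuously divisible by $x^N$; its Taylor expansion transverse to $\{x=0\}$ then gives $\partial_x^{\,j} X^i(t,0)=0$ for all $j<N$, hence for all $j$, and consequently every mixed partial $\partial_t^a\partial_x^b X^i$ vanishes along $\{x=0\}$. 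This contradicts $n<\infty$ and restores the argument. (A minor quibble: the principal-part rule recorded in the Introduction is for multiplication of a tensor by a function nonzero at the point, whereas in $X=g\widehat X$ it is the function $g$ that vanishes and the tensor $\widehat X$ that does not; your conclusion $\operatorname{ord}_q X=\operatorname{ord}_q g$ is correct and elementary, but it is not literally the rule you cite.)
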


\begin{proof}

Around $p$ consider a vector field $U$ as in Lemma \ref{lem-1} 
such that $U(p),Y(p)$ are linearly independent.  
Then there are coordinates 
$(x_1 ,x_2 )$ about $p\zeq 0$, whose domain $D$ can be identified to a product of 
two open intervals  $J_1 \zpor J_2$, such that $Y=\zpar/\zpar x_1$ 
and $U=\zpar/\zpar x_2 +x_1 V$.

Let $X=g_1 \zpar/\zpar x_1 +g_2 \zpar/\zpar x_2$. Then 
$${\frac{\zpar g_k } {\zpar x_1}}=fg_k ,\, k=1,2.$$

Since $f$ is continuous the general solution to the equation above is: 
$$g_k (x)=h_k (x_2 )e^\zf,\, k=1,2,$$
where $\zpar\zf/\zpar x_1 =f$ and $\zf(\{0\}\zpor J_2 )=0$. 

From the Taylor expansion at $p$ of $X$ and $U$ it follows that
$$[U,[U,\dots[U,X]\dots]](0)=\zcizq{\frac {\zpar} {\zpar x_2} }, 
\zcizq{\frac {\zpar} {\zpar x_2} },\dots
\zcizq{\frac {\zpar} {\zpar x_2} },X \zcder\dots \zcder \zcder(0)$$
for the $n$-times iterated bracket.

Note that
$$\zcizq{\frac {\zpar} {\zpar x_2} }, \zcizq{\frac {\zpar} {\zpar x_2}},\dots
\zcizq{\frac {\zpar} {\zpar x_2} },X \zcder\dots \zcder \zcder(0)
={\frac {\zpar^n g_1} {\zpar x_2^n}}(0){\frac {\zpar} {\zpar x_1}}
+{\frac {\zpar^n g_2} {\zpar x_2^n}}(0){\frac {\zpar} {\zpar x_2}}.$$

Since on $\{0\}\zpor J_2$ each $g_k =h_k$ finally one has
$${\frac {\zpar^n h_1} {\zpar x_2^n}}(0){\frac {\zpar} {\zpar x_1}}
+{\frac {\zpar^n h_2} {\zpar x_2^n}}(0){\frac {\zpar} {\zpar x_2}}
=[U,[U,\dots[U,X]\dots]](0)\znoi 0,$$
which implies the existence of two diferentiable functions $\tilde h_1 (x_2)$ and 
$\tilde h_2 (x_2)$ such that $h_k =x_2^n \tilde h_k (x_2)$, $k=1,2$, and
$\tilde h_1^2 (0)+\tilde h_2^2 (0)>0$.

Therefore by shrinking $D$ if necessary, we may suppose that at least one of these
function, say $\tilde h_\zlma$, does not have any zero. Observe that $f$ will be
differentiable if $\tilde h_\zlma e^\zf$ is differentiable because 
$\tilde h_\zlma$ is differentiable
without zeros and $\zpar\zf/\zpar x_1 =f$. 

As $g_\zlma =x_2^n \zpu(\tilde h_\zlma e^\zf )$, it follows that $g_\zlma$ is
divisible by $1,x_2 ,\dots,x_2^n$ and the respective quotient functions are at least
continuous. Moreover $g_\zlma /x^r$, $r=1,\dots,n-1$, vanish if $x_2 =0$, that
is to say on $J_1 \zpor\{0\}$.

The Taylor expansion of $g_\zlma$ transversely to $J_1 \zpor\{0\}$ leads
$$g_\zlma =\zsu_{r=0}^{n-1}x_2^r \zm_r (x_1 ) +x_2^n \zm_n (x_1 ,x_2 )$$
where each $\zm_k$, $k=1,\dots,n$ is differentiable. 

Now since $g_\zlma (J_1 \zpor\{0\})=0$ one has $\zm_0 =0$.

In turn as $g_\zlma /x_2$ equals zero on $J_1 \zpor\{0\}$ it follows $\zm_1 =0$,
and so one. Hence $\zm_0 =\dots=\zm_{n-1}=0$, which implies
$g_\zlma =x_2^n \zm_n (x_1 ,x_2 )$. Therefore $\tilde h_\zlma e^\zf =\zm_n$
is differentiable, which proves \ref{lem-2a}. 

On the other hand, as $e^\zf$ is differentiable and positive, $X$ and 
$$X'\co=e^{-\zf}X=x_2^n \left(\tilde h_1 {\frac {\zpar} {\zpar x_1}}
+\tilde h_2 {\frac {\zpar} {\zpar x_2}}\right)$$
have the same order everywhere. Thus $X$ has order $n$ at every point
of $J_1 \zpor\{0\}$ and \ref{lem-2b} becomes obvious.
\end{proof}

\begin{lemma}\mylabel{lem-3}
Under the hypotheses of Proposition \ref{pro-1} consider $p\zpe\Zb_\zinf (X)$ 
with $Y(p)\znoi 0$.  Let $\zg\co(a,b)\zfl M$ be an integral curve of $Y$ passing 
through $p$ for some $t_0 \zpe (a,b)$. Then there exists $\ze>0$ such that 
$\zg(t_0 -\ze,t_0 +\ze)\zco\Zb_\zinf (X)$.
\end{lemma}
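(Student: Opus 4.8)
The plan is to mimic the structure of the proof of Lemma \ref{lem-2}, replacing the finite-order analysis by a flatness statement. First I would choose coordinates $(x_1,x_2)$ about $p\zeq 0$ with $Y=\zpar/\zpar x_1$ (the flow-box theorem for $Y$, valid since $Y(p)\znoi 0$); I can take the domain $D$ to be a product $J_1\zpor J_2$ of open intervals, arranged so that the integral curve $\zg$ through $p$ is the segment $J_1\zpor\{0\}$, reparametrized so $\zg(t)=(t,0)$. Writing $X=g_1\zpar/\zpar x_1+g_2\zpar/\zpar x_2$, the tracking condition $[Y,X]=fX$ becomes exactly $\zpar g_k/\zpar x_1=fg_k$ for $k=1,2$, the same ODE as before. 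With $f$ continuous, the general solution is $g_k(x)=h_k(x_2)e^{\zf(x)}$ where $\zpar\zf/\zpar x_1=f$ and $\zf(\{0\}\zpor J_2)=0$; here $h_k(x_2)=g_k(0,x_2)$.

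The key point is then that $p\zpe\Zb_\zinf(X)$ forces $h_1$ and $h_2$ to be flat at $0$. Indeed, since $e^\zf$ is smooth and nowhere zero, $X$ and $X'\co=e^{-\zf}X=h_1(x_2)\zpar/\zpar x_1+h_2(x_2)\zpar/\zpar x_2$ have the same order at every point of $D$; in particular $X'$ has infinite order at $0$. But $X'$ does not involve $x_1$ at all on the slice structure in the relevant direction: all $x_1$-derivatives of its components vanish, so the $\infty$-jet of $X'$ at $0$ being zero is equivalent to $j_0^\zinf h_1=0$ and $j_0^\zinf h_2=0$, i.e. $h_1$ and $h_2$ (as functions of the single variable $x_2$) are flat at $0$. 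Hence the functions $x_2\mapsto h_k(x_2)$, being smooth and flat at $0$, are flat at $0$, and therefore so are $g_k(x_1,x_2)=h_k(x_2)e^{\zf(x_1,x_2)}$ at every point of $J_1\zpor\{0\}$: a product of a smooth function flat in $x_2$ at $x_2=0$ with a smooth function is flat at $x_2=0$. Consequently $X'$, and hence $X$, has infinite order at every point of $J_1\zpor\{0\}$; choosing $\ze>0$ with $(t_0-\ze,t_0+\ze)\zco J_1$ gives $\zg(t_0-\ze,t_0+\ze)\zco\Zb_\zinf(X)$.

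I expect the only delicate point to be the claim that $j_0^\zinf X'=0$ reduces to the one-variable flatness of $h_1,h_2$: one must check that all mixed partials $\zpar^{a+b}(h_k(x_2)\,\cdot\,1)/\zpar x_1^a\zpar x_2^b$ at $0$ vanish, which is immediate since the components of $X'$ are independent of $x_1$, so only the pure $x_2$-derivatives survive, and those are $h_k^{(b)}(0)=0$. The rest is the elementary fact that the product of a smooth function all of whose $x_2$-derivatives vanish on $J_1\zpor\{0\}$ with an arbitrary smooth function again has this property — an application of the Leibniz rule — which transfers infinite order from $X'$ to $X$ along the whole curve.
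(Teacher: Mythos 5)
There is a genuine gap: you assert that $e^{\varphi}$ is smooth, but nothing guarantees this. The function $\varphi(x_1,x_2)=\int_0^{x_1}f(s,x_2)\,ds$ is built from the tracking function $f$, which is only known to be \emph{continuous}; its differentiability near a zero of $X$ is exactly what part (a) of Lemma \ref{lem-2} proves, and that proof uses the \emph{finite} order of $X$ at $p$ in an essential way (one divides by $x_2$ only $n$ times). At a flat point this argument is unavailable, and Remark \ref{rem-1} of the paper exhibits a tracking function that is genuinely non-differentiable along a curve of flat zeros. So the two places where you invoke smoothness of $e^{\varphi}$ are not justified: (i) the claim that $X$ and $X'=e^{-\varphi}X$ have the same order, used to get $j_0^{\infty}h_k=0$, and (ii) the Leibniz-rule computation showing that $h_k(x_2)e^{\varphi}$ is flat along $J_1\times\{0\}$. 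Point (i) is harmless: flatness of $h_1,h_2$ at $0$ follows simply by restricting the smooth components $g_k$ to the slice $\{0\}\times J_2$, where $\varphi\equiv 0$, so that the pure $x_2$-derivatives of $g_k$ at the origin are exactly $h_k^{(m)}(0)$; this is how the paper obtains it. Point (ii) is the real problem, since the Leibniz rule needs both factors to be differentiable.

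Your step (ii) can be repaired, but by an estimate rather than by differentiation: since $e^{\varphi}$ is continuous it is bounded near any $(x_1^0,0)$, so $|g_k(x_1,x_2)|\leq C\,|h_k(x_2)|\leq C_m|x_2|^m$ for every $m$; a \emph{smooth} function that is $o(|x-q|^m)$ for all $m$ has vanishing Taylor polynomial of every degree at $q$, hence is flat at $q$. With that substitution your direct "propagation of flatness" argument closes, and it is in fact a different (and shorter) route than the paper's: the paper instead argues by contradiction, supposing a point $q$ of finite order $n$ on the integral curve, invoking the normal form $X=x_2^ne^{\varphi}(\tilde h_1\partial/\partial x_1+\tilde h_2\partial/\partial x_2)$ from Lemma \ref{lem-2} around $q$, and contradicting the continuous divisibility of $X$ by the $(n+1)$-st power of a defining function of the curve, which it gets from the flatness of $a_1,a_2$ at $p$. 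As submitted, however, the proof rests on an unproved (and generally false) smoothness claim, so it is not complete.
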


\begin{proof}
Around $p\zeq 0$ consider coordinates $(y_1 ,y_2 )$, whose domain $E$ can be 
identified to a product of two open intervals $K_1 \zpor K_2$, such that 
$Y=\zpar/\zpar y_1$ and $X=a_1 (y_2 )e^\zr \zpar/\zpar y_1
+a_2 (y_2 )e^\zr \zpar/\zpar y_2$ where $\zpar\zr/\zpar y_1 =f$ and
$\zr(\{0\}\zpor K_2 )=0$. These coordinates exist by the same reason as in the
proof of Lemma \ref{lem-2}.

Assume the existence of a $q\zpe K_1 \zpor\{0\}$ of finite order $n$.

Since $p\zpe\Zb_\zinf$ and $e^\zr$ equals $1$ on $\{0\}\zpor K_2$, it follows that
$j_0^\zinf a_1 =j_0^\zinf a_2 =0$. Therefore $a_k (y_2 )=y_2^{n+1}b_k (y_2 )$,
$k=1,2$, where each $b_k$ is differentiable. Hence there exists a continuous vector
field $X_n$ such that $X=y_2^{n+1}X_n$; that is to say $X$ is continuously divisible
by $y_2^{n+1}$.

In turn one can find coordinates $(x_1 ,x_2 )$ around $q\zeq 0$ 
whose domain $D$ can be identify to
$J_1 \zpor J_2$ as in the proof of Lemma \ref{lem-2}, which implies that
$$X=x_2^n e^\zf \left(\tilde h_1 (x_2 ){\frac {\zpar} {\zpar x_1}}
+\tilde h_2 (x_2 ){\frac {\zpar} {\zpar x_2}}\right)$$
where $\tilde h_1 \zpar/\zpar x_1 +\tilde h_2 \zpar/\zpar x_2$ has no zero on $D$.

By shrinking $D$ if necessary, we may suppose $D\zco E$. 
Then, regarded both sets in $M$, $J_1 \zpor\{0\}$ is a subset of $K_1 \zpor\{0\}$ 
since they are traces of integral curves of $Y$ with $q$ as common point. 

On the other hand as $y_2$ vanishes on $K_1 \zpor\{0\}$ but its derivative never does,
on $D$ one has $y_2 =x_2 c(x_1 ,x_2 )$ where $c$ has no zero. This fact implies 
that $X$ on $D$ is continuously divisible by $x_2^{n+1}$ because it was 
continuously divisible by $y_2^{n+1}$.

But clearly from the expression of $X$ in coordinates $(x_1 ,x_2 )$ it follows the
non-divisibility by $x_2^{n+1}$, contradiction. In short the order of $X$ at each
point of $K_1 \zpor\{0\}$ is infinite.
\end{proof}

\begin{remark}\mylabel{rem-1}
{\rm Under the hypotheses of Proposition \ref{pro-1} the tracking function $f$ can be not 
differentiable around a flat point. For instance, on $\RR^2$ set 
$Y=x_1^4 \zpar/x_1 +\zpar/\zpar x_2$ and $X=g(x_1 )\zpar/x_1$, where 
$g(x_1 )=e^{-1/x_1}$ if $x_1 >0$, $g(x_1 )=e^{-1/x_1^2}$ 
if $x_1 <0$ and $g(0)=0$. Then
$f(x)=x_1^2 -4x_1^3$ if $x_1 >0$, $f(x)=2x_1 -4x_1^3$ if $x_1 <0$ and
$f(\{0\}\zpor\RR)=0$, which is not differentiable on $\{0\}\zpor\RR$.}
\end{remark}

\begin{proof}[Proof of Proposition \ref{pro-1}]
Let us proves the first assertion. Consider a non-constant integral curve of $Y$
(the constant case is clear)  $\zg\colon (a,b)\zfl M$.
By Lemmas \ref{lem-2} and \ref{lem-3},  $\zg^{-1}(\Zb(X))$
is open in $(a,b)$. As this set is closed too one has $\zg^{-1}(\Zb(X))=\zva$ or
$\zg^{-1}(\Zb(X))=(a,b)$. The first case is obvious; in the second one
$(a,b)=\zung_{n\zpe\NN'}\zg^{-1}(\Zb_n (X))$ where each term of this union is open.
Therefore a single term of this disjoint union is non-empty since $(a,b)$ is connected.

For the second assertion apply \ref{lem-2a} of Lemma \ref{lem-2} taking into account that 
$f$ is always differentiable on $M\menos\Zb(X)$ because, on this set, the quotient  
$[Y,X]/X$ has a meaning.
\end{proof}

\begin{proposition}\mylabel{pro-2}
On a surface $M$ consider a vector field $X$ such that $\Zb(X)\znoi\zva$ but 
$\Zb_\zinf (X)=\zva$. Then at least one of the following assertions holds:
\begin{enumerate}[label={\rm (\arabic{*})}]
\item\mylabel{pro-21} $\Zb(X)$ is a regular (embedded) $1$-submanifold.
\item\mylabel{pro-22} There exists a primary singularity of $X$. 
\end{enumerate}
\end{proposition}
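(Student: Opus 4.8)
The plan is to argue by contradiction: assume $X$ has no primary singularity in $\Zb(X)$, and deduce that $\Zb(X)$ must be a regular $1$-submanifold. By hypothesis every zero of $X$ has finite order, so fix $p\zpe\Zb(X)$, say of order $n\zmai 1$. Since $p$ is not primary there is a vector field $Y$ defined about $p$ that tracks $X$ with $Y(p)\znoi 0$. Now apply Lemma \ref{lem-2}: part \ref{lem-2b} shows that the local integral curve of $Y$ through $p$ lies entirely in $\Zb_n(X)$, and the proof of Lemma \ref{lem-2} actually produces coordinates $(x_1,x_2)$ about $p\zeq 0$ in which $Y=\zpar/\zpar x_1$ and
$$X=x_2^n e^\zf\left(\tilde h_1(x_2)\,{\frac{\zpar}{\zpar x_1}}+\tilde h_2(x_2)\,{\frac{\zpar}{\zpar x_2}}\right),$$
with $\tilde h_1\zpar/\zpar x_1+\tilde h_2\zpar/\zpar x_2$ nowhere vanishing and $e^\zf$ differentiable and positive. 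From this local normal form one reads off that, near $p$, $\Zb(X)=\{x_2=0\}$, which is a $1$-dimensional embedded submanifold; moreover every zero near $p$ has the same order $n$ as $p$.

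The first key step is therefore purely local: every non-primary zero of $X$ has a neighborhood in which $\Zb(X)$ is a regular curve and the order is locally constant. The second step is to globalize. Under the contradiction hypothesis \emph{every} zero of $X$ is non-primary, so the local description holds at every point of $\Zb(X)$; hence $\Zb(X)$ is a (not necessarily connected) regular embedded $1$-submanifold of $M$, giving alternative \ref{pro-21}. This closes the argument: either some zero is primary (\ref{pro-22}) or $\Zb(X)$ is a $1$-submanifold (\ref{pro-21}).

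The main obstacle I anticipate is not in the logical skeleton but in the bookkeeping of the local normal form: one must verify that the coordinates furnished by the proof of Lemma \ref{lem-2} really do give $\Zb(X)\zin D=\{x_2=0\}$ and not merely $\{x_2=0\}\zco\Zb(X)\zin D$. This follows because $X'=e^{-\zf}X=x_2^n(\tilde h_1\zpar/\zpar x_1+\tilde h_2\zpar/\zpar x_2)$ and the bracketed field has no zero on $D$, so $X'$ — and hence $X$ — vanishes on $D$ exactly where $x_2^n=0$, i.e. exactly on $\{x_2=0\}$; the argument of Lemma \ref{lem-2}\ref{lem-2b} also shows the order is constantly $n$ there. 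A minor point to state carefully is that $Y$ need only be defined on a neighborhood of $p$, which is exactly the setting of Lemmas \ref{lem-1}--\ref{lem-2}, so no global extension of $Y$ is required.
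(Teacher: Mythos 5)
Your proposal is correct and follows essentially the same route as the paper: assume no primary singularity, pick a non-vanishing tracking field $Y$ at each zero, invoke the normal form $X=x_2^n e^\zf(\tilde h_1\zpar/\zpar x_1+\tilde h_2\zpar/\zpar x_2)$ from the proof of Lemma \ref{lem-2}, and read off that $\Zb(X)$ is locally $\{x_2=0\}$. Your extra remark that the zero set is exactly, and not merely contains, $\{x_2=0\}$ is a worthwhile clarification of the step the paper states without comment.
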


\begin{proof}
Assume the non-existence of primary singularities.

Consider any $p\zpe\Zb(X)$ and a vector field $Y$ defined around $p$ with $Y(p)\znoi 0$
that tracks X. Let $U$ be a second vector field about $p$ as in Lemma \ref{lem-1} such
that $U(p),Y(p)$ are linearly independent. Then there exist coordinates 
$(x_1 ,x_2 )$, about $p\zeq 0$, whose domain $D$ can be identified to a product of two 
open intervals  $J_1 \zpor J_2$ such that $Y=\zpar/\zpar x_1$ and 
$U=\zpar/\zpar x_2 +x_1 V$.

The same reasoning as in the proof of Lemma \ref{lem-2} allows to suppose that
$$X=x_2^n e^\zf \left(\tilde h_1 {\frac {\zpar} {\zpar x_1}}
+\tilde h_2 {\frac {\zpar} {\zpar x_2}}\right)$$
with $\tilde h_1^2+\tilde h_2^2 >0$ everywhere.

Therefore $\Zb(X)\zin D$ is given by the equation $x_2 =0$, which implies that $\Zb(X)$
 is a regular $1$-submanifold.
\end{proof}

\begin{theorem}\mylabel{the-2}
Consider a vector field $X$ on a surface $M$. Assume that:
\begin{enumerate}[label={\rm (\alph{*})}]
\item\mylabel{the-2a} $\Zb_\zinf(X)=\zva$. 
\item\mylabel{the-2b} There is a connected component of $\Zb(X)$ that is not
included in a single $\Zb_n (X)$.
\end{enumerate}

Then there exists a primary singularity of $X$.
\end{theorem}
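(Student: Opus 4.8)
The plan is to argue by contradiction: assume that $X$ has no primary singularity and derive a violation of hypothesis \ref{the-2b}.

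By \ref{the-2b} we have $\Zb(X)\znoi\zva$, so, together with \ref{the-2a}, Proposition \ref{pro-2} applies. Since the standing assumption rules out its alternative \ref{pro-22}, alternative \ref{pro-21} holds: $\Zb(X)$ is a regular $1$-submanifold of $M$. This observation is convenient for visualising the situation but is not logically indispensable for what follows.

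The core step is to prove that, under the standing assumption, the order of $X$ is locally constant along $\Zb(X)$. Fix $p\zpe\Zb(X)$; by \ref{the-2a} its order $n$ is finite. Since $p$ is not primary, there is a vector field $Y$ defined about $p$, tracking $X$, with $Y(p)\znoi 0$. Carrying out the coordinate construction used in the proof of Proposition \ref{pro-2} (which in turn relies on Lemmas \ref{lem-1} and \ref{lem-2}) yields coordinates $(x_1,x_2)$ on a neighbourhood $D=J_1\zpor J_2$ of $p\zeq 0$ such that
\[
X=x_2^n e^\zf\left(\tilde h_1\frac{\zpar}{\zpar x_1}+\tilde h_2\frac{\zpar}{\zpar x_2}\right),
\]
with $e^\zf>0$ and $\tilde h_1^2+\tilde h_2^2>0$ throughout $D$. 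Consequently $\Zb(X)\zin D=J_1\zpor\{0\}$, and at every point of this set $X$ is $x_2^n$ times a smooth nowhere-vanishing vector field, hence has order exactly $n$ there. Thus the order of $X$ is constant on a neighbourhood, in $\Zb(X)$, of each zero of $X$.

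Finally, apply the previous step to the connected component $C$ of $\Zb(X)$ provided by \ref{the-2b}: an integer-valued locally constant function on the connected set $C$ must be constant, so $C\zco\Zb_n(X)$ for a single $n$, contradicting \ref{the-2b}. Hence $X$ admits a primary singularity. I do not anticipate a genuine obstacle: the sole non-routine ingredient, the coordinate normal form for $X$ near a zero possessing a non-vanishing tracking field, is already supplied by Lemmas \ref{lem-1}--\ref{lem-2}. The point worth highlighting is the logical leverage of the contradiction hypothesis --- ``no primary singularity'' makes that normal form available near \emph{every} zero of $X$, which is exactly what forces the order of $X$ to be locally, and therefore componentwise, constant.
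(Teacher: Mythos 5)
Your proposal is correct and follows essentially the same route as the paper: assume no primary singularity, use the normal form $X=x_2^n e^\zf(\tilde h_1\,\zpar/\zpar x_1+\tilde h_2\,\zpar/\zpar x_2)$ from Lemmas \ref{lem-1}--\ref{lem-2} near each zero to see that the order of $X$ is locally constant on $\Zb(X)$, and conclude that the component $C$ lies in a single $\Zb_n(X)$, contradicting \ref{the-2b}. The paper phrases the last step as ``each $C\zin\Zb_r(X)$ is open in $C$, so $C$ is disconnected,'' which is the same argument; your observation that the $1$-submanifold conclusion of Proposition \ref{pro-2} is a byproduct rather than a needed input is accurate.
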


\begin{proof}
Assume there is no primary singularity.
By Proposition \ref{pro-2}, $\Zb(X)$ is a regular $1$-submanifold of $M$. 
By hypothesis there are a connected component $C$ of $\Zb(X)$ and two 
different natural numbers $m$ and $n$ such that $C$ meets $\Zb_m (X)$ and $\Zb_n (X)$.

As $C$ is a regular $1$-submanifold, Proposition \ref{pro-1} and  Lemma \ref{lem-2}
imply that each $C\zin\Zb_r (X)$, $r\zpe\NN$, is open in $C$. Therefore $C$ is a disjoint
union of a family of non-empty open sets with two or more elements hence 
not connected, contradiction.
\end{proof}

\subsection{Proof of of Theorem \ref{the-1}} \mylabel{secPr}
It consists of three steps.

{\bf 1.} Assume that there is no primary singularity in $K$. From Proposition
\ref{pro-2} applied to an isolating open set it follows that $K$ is a compact
$1$-submanifold. Notice that at least  one of its connected component is
an essential block. Therefore one may suppose that $K$ is diffeomorphic to
$S^1$ and, by shrinking $M$, that $\Zb(X)=K$.

Consider a Riemannian metric $g$ on $M$. Given $p\zpe K$ by reasoning as
before one can find coordinates $(x_1 ,x_2 )$ such that $p\zeq 0$ and
$$X=x_2^n e^\zf \left(\tilde h_1 {\frac {\zpar} {\zpar x_1}}
+\tilde h_2 {\frac {\zpar} {\zpar x_2}}\right)$$
where $\tilde h_1 \zpar/\zpar x_1 +\tilde h_2 \zpar/\zpar x_2$ has no zero.
Therefore around $p$ there exists an $1$-dimensional vector subbundle
$\E$ of the tangent bundle that is orthogonal to $X$. Such a vector subbundle is unique
because clearly it exists and  is unique outside $K$. Thus, gluing together the local 
constructions gives rise to an $1$-dimensional  vector subbundle $\E$ of $TM$ that is  
orthogonal to $X$.

{\bf 2.} If $\E$ is trivial there exists a nowhere singular vector field $V$ such that
$g(V,X)=0$. Let $\zf\co M\zfl\RR$ be a function with a sufficiently narrow compact 
support such that $\zf(K)=1$. Set $X_\zd :=X+\zd\zf V$, $\zd>0$.
Then $X_\zd$ approaches $X$ as much as desired and $\Zb(X_\zd )=\zva$, so $K$ 
is an inessential block.

{\bf 3.} Now assume that $\E$ is not trivial. There always exists a $2$-folding
covering space $\zp\co M'\zfl M$ such that the pull-back $\E'\zco TM'$ of the vector
subbundle $\E$ is trivial.

Consider the vector field $X'$ on $M'$ defined by $\zp_* (X')=X$. Then
$\Zb(X')=\zp^{-1}(K)$ and $X'$ is nowhere flat. Moreover $\E'$ is orthogonal to
$X'$ with respect to the pull-back of $g$. Now the same reasoning as in the
foregoing step shows that $\ib_{\Zb(X')}(X')=0$. But clearly
$\ib_{\Zb(X')}(X')=2\ib_K (X)$ and hence $K$ is inessential.

\section{Examples} \mylabel{secEx}

\begin{example} \mylabel{exaPL}
{\rm In this example one shows two facts. First, primary singularities can exist
even if the index of $X$ is not definable. Second, 
being nowhere flat is a weaker hypothesis than being analytic.

Consider a proper closed subset $C$ of $\RR$ and a function $\zf\co\RR\zfl\RR$ 
such that $\zf^{-1}(0)=C$. Set $X:=x^2_1 \zpar/\zpar x_1 
+x_1 \zf(x_2 )\zpar/\zpar x_2$.
Then $\Zb(X)=\{0\}\zpor\RR$, $\Zb_1 (X)=\{0\}\zpor(\RR\setminus C)$,
$\Zb_2 (X)=\{0\}\zpor C$ and $\Zb_n (X)=\zva$ for $n\znoi 1,2$, so $X$ is
nowhere flat. By Theorem \ref{the-2} the vector field 
$X$ has primary singularities.

More exactly the set $S_a$ of primary singularities of $X$ equals
$\{0\}\zpor(C\setminus {\buildrel {\circ}\over{C}})$. Indeed:
\begin{enumerate}[label={\rm (\arabic{*})}]
\item\mylabel{exaPL-1} $\zf(x_2 )\zpar/\zpar x_2$ tracks $X$ and does not vanish
on $\{0\}\zpor(\RR\setminus C)$.
\item\mylabel{exaPL-2} $\zpar/\zpar x_2$ tracks $X$ on
$\RR\zpor{\buildrel {\circ}\over{C}}$.
\end{enumerate}
 
Therefore $S_a \zco\{0\}\zpor(C\setminus{\buildrel {\circ}\over{C}})$.

Take $p=(0,c)\zpe\{0\}\zpor(C\setminus{\buildrel {\circ}\over{C}})$. Assume the existence 
around this point of a vector field $Y$ with $Y(p)\znoi 0$ that tracks $X$. Them from
Proposition \ref{pro-1} and Lemma \ref{lem-2} it follows the existence of $\ze>0$ such that   
the order of $X$ at every point of $\{0\}\zpor(c-\ze,c+\ze)$ is constant and hence $c$
belongs to the interior of $\RR\setminus C$ or to that of $C$.
Therefore $c\znope C\setminus{\buildrel {\circ}\over{C}}$ contradiction.

In short, each element of $\{0\}\zpor(C\setminus{\buildrel {\circ}\over{C}})$ is a primary 
singularity and $S_a =\{0\}\zpor(C\setminus{\buildrel {\circ}\over{C}})$.

Finally observe that if $C$ is a Cantor set, then $X$ is not analytic for any analytic
structure on $\RR^2$ since $\Zb_2 (X)=\{0\}\zpor C$ is never an analytic set.} 
\end{example}

\begin{example} \mylabel{exaSP}
{\rm In this example one gives a vector field on $S^2$, which is analytic so
with no flat points, whose zero set is a circle just with two primary singularities.

The sphere $S^2$ can be regarded as  the leaves space of the $1$-dimensional foliation
on $\RR^3\setminus\{0\}$ associated to the vector field $V=\zsu_{k=1}^3 x_k \zpar/\zpar x_k$, 
while the canonical projection $\zp\co\RR^3 \setminus\{0\}\zfl S^2$ is given 
by $\zp(x)=x/\zdbv x\zdbv$.

Every linear vector field $U'$ commutes with $V$ and can be projected by $\zp$ on a vector
field $U$ on $S^2$. Moreover $U(a)=0$, where $a=(a_1 ,a_2 ,a_3 )\zpe S^2$, if and only if 
$a$ is an eigenvector  of $U'$ regarded as an endomorphism of $\RR^3$, that is to say if
and only if 
$$\left[\zsu_{k=1}^3 a_k {\frac {\zpar} {\zpar x_k }},U'\right]
=\zl\zsu_{k=1}^3 a_k {\frac {\zpar} {\zpar x_k }}$$
for some scalar $\zl$.

Set $X\co=\zp_* (x_1 \zpar/\zpar x_2 )$. Then
$\Zb (X)=\{x\zpe S^2 \co x_1 =0\}$ is an essential block of index two since $\chi (S^2 )=2$.
By Corollary \ref{cor-1} the set $S_a$ of primary singularities of $X$ is not empty.

For determining it consider the vector field $Y\co=\zp_* (x_3 \zpar/\zpar x_2 )$. Then 
$[X,Y]=0$ because $[x_1 \zpar/\zpar x_2 ,x_3 \zpar/\zpar x_2]=0$. Moreover
$\Zb (Y)=\{x\zpe S^2 \co x_3 =0\}$.

As $Y$ tracks $X$, the vector field $Y$ is tangent to $\Zb(X)$. On the other hand
$\Zb (X)\zin\Zb (Y)=\{(0,1,0),(0,-1,0)\}$, so $S_a\zco\{(0,1,0),(0,-1,0)\}$. Since 
$F_* X=X$, where $F$ is the antipodal map, one has $F(S_a )=S_a$ and hence
$S_a=\{(0,1,0),(0,-1,0)\}$.}
\end{example}

\begin{example} \mylabel{exaFL}
{\rm Let $M$ be a connected compact surface of non-vanishing Euler characteristic.
As it is well known, on $M$ there always exist two vector fields $X,Y$ with no common
zero such that $[Y,X]=X$ (Lima \cite{Lima64}, Plante \cite{Plante86}; see
\cite{Belliart97,Turiel16} as well). 
Therefore there is no primary singularity of $X$,
{\em but there always exists a periodic regular trajectory of $Y$ 
 included in $\Zb_\zinf (X)$.}

Indeed, by Corollary \ref{cor-1} and Proposition \ref{pro-1} the set 
$\Zb_\zinf (X)$ is non-empty and
$Y$-invariant. Since $\Zb_\zinf (X)$ is compact, there always exists a minimal set 
$S\zco\Zb_\zinf (X)$ of (the action of) $Y$.

As $\Zb(X)\zin\Zb(Y)=\zva$, a generalization of the Poincar\'e-Bendixson theorem
\cite{SC} implies that $S$ is homeomorphic to a circle. In other words, there exists
a non-trivial periodic trajectory of $Y$ consisting of flat points of $X$.

More generally, given a vector field $\widehat X$ on $M$ let $\A$ be the real vector 
space of those vector fields on $M$ that track $\widehat X$. Assume that
$\Zb(\widehat X)\znoi M$ and $\Zb(\widehat X)\zin(\zing_{V\zpe\A}\Zb(V))=\zva$. Then
by Corollary \ref{cor-1} the compact set $\Zb_\zinf (\widehat X)$ is not empty and 
contains a minimal set $\widehat S$ of $\A$ (more exactly of the group of 
diffeomorphisms of $M$ spanned  by the flows of the elements of $\A$). 

Clearly $\widehat S$ is not a point. A second generalization of the 
Poincar\'e-Bendixson theorem
\cite{HO} shows that $\widehat S$ is homeomorphic to a circle.

Even more, in our case $\widehat S$ is a regular $1$-submanifold and hence diffeomorphic to
a circle. Let us see it. Take $p\zpe\widehat S$; then there is $V\zpe\A$ with $V(p)\znoi 0$.
Consider coordinates $(x_1 ,x_2 )$ around $p\zeq 0$ whose 
domain $D$ is identified in the natural 
way to a product $(-\ze,\ze)\zpor(-\ze,\ze)$ such that $V=\zpar/\zpar x_1$.

Let $\zg\co(-\zd,\zd)\zfl M$ be an integral curve of $V$ with initial condition $\zg(0)=p$. 
Then $\zg(-\zd,\zd)\zco\widehat S$. Moreover, if 
$\zd$ is sufficiently small $\zg(-\zd,\zd)$ is a relatively open subset of $\widehat S$.
Indeed, $\zg\co(-\zd,\zd)\zfl\widehat S$ will be injective so open because $\widehat S$
is a $1$-dimensional topological manifold (actually $S^1$).   
Now by shrinking $D$ and $(-\zd,\zd)$ if necessary,
we may suppose that $\zg(-\zd,\zd)\zco D$, $\zd=\ze$  and $\zg(t)=(t,0)$.
Thus $(-\ze,\ze)\zpor\{0\}=\zg(-\zd,\zd)$ is relatively open in $\widehat S$ and there exists 
an open set $E$ of $M$ such that $E\zin\widehat S=(-\ze,\ze)\zpor\{0\}$. Hence 
$\widehat S \zin(D\zin E)$ is defined by the equation $x_2 =0$ in the system of coordinates
$(D\zin E,(x_1 ,x_2 ) )$.}
\end{example}

\subsection{An example from the blowup process} \mylabel{exaPO}
In this subsection one constructs a homogeneous polynomial vector
field on $\RR^2$ whose trajectories but a finite number, let us call them 
{\em exceptional}, have the origin both as $\za$ and $\zb$-limit. Then by
blowing up the origin one obtains a new vector field on a Moebius band 
whose number of primary singularities equals half 
that of exceptional trajectories of the first vector field. 

Thus a global property on the trajectories of a vector field
becomes a semi-local property on the primary singularities of another vector field.

First some technical facts.
Denote by $\widetilde \RR^2$ the surface obtained by blowing up the origin of $\RR^2$
and by $\tilde p\co\widetilde \RR^2\zfl\RR^2$ the canonical projection. 
Recall that $\widetilde \RR^2$ is a Moebius band. 
If $X$ is a vector field on
$\RR^2$ that vanishes at the origin, the blowup process gives rise to a vector field 
$\widetilde X$ on $\widetilde\RR^2$ such that $\tilde p_* \widetilde X=X$. When the 
origin is an isolated singularity of index $k$ and the order of $X$ at this point is $\zmai 2$,
then $\tilde p^{-1}(0)$ is a $\widetilde X$-block of index $k-1$.

Now identify $\CC$ to $\RR^2$ by setting $z=x_1 +ix_2$. Then each complex vector field
$z^n \zpar/\zpar z$, $n\zmai 2$, can be considered as a vector field 
$X_n =P_n \zpar/\zpar x_1 +Q_n \zpar/\zpar x_2$ on $\RR^2$ 
where $z^n =(x_1 +ix_2 )^n 
=P_n (x_1 ,x_2 )+iQ_n (x_1 ,x_2 )$. Our purpose will be to show that
$\Zb(\widetilde X_n )=\tilde p^{-1}(0)$ contains $n-1$ primary singularities of 
$\widetilde X_n$. (Recall that the origin is a singularity of $X_n$ of index $n$ and hence
$\tilde p^{-1}(0)$ is a $\widetilde X_n$-block of index $n-1$.)

\subsubsection{$\widetilde\RR^2$ from another point of view} \mylabel{exaPO1}
Consider the map $\zf\co\RR\zpor S^1\zfl\RR^2$ given by 
$\zf(r,\zh)=(rcos\zh,rsin\zh)$. Then $\zf\co\RR_+ \zpor S^1\zfl\RR^2 \setminus\{0\}$ 
and $\zf\co\RR_- \zpor S^1\zfl\RR^2 \setminus\{0\}$ are diffeomorphisms, and 
$\zf(r,\zh)=\zf(r',\zh')$ with $(r,\zh),(r',\zh')\zpe(\RR\setminus\{0\})\zpor S^1$ 
if and only if $(r,\zh)=(r',\zh')$ or $(r',\zh')=(-r,\zh+\zp)$. 

Let $\sim$ be the equivalence relation on $\RR\zpor S^1$ defined by $(r,\zh)\sim(r',\zh')$  
if and only if $(r,\zh)=(r',\zh')$ or $(r',\zh')=(-r,\zh+\zp)$. Then the quotient space
$M_s\co=(\RR\zpor S^1 )/\sim$ is  a Moebius strip and the canonical projection 
$p\co\RR\zpor S^1 \zfl M_s$ is a (differentiable) covering space with two folds.
Moreover the map $\bar\zf\co M_s \zfl\RR^2$ given by
$\bar\zf(p(r,\zh))=\zf(r,\zh)$ is well defined and differentiable. 

Recall that $\tilde p^{-1}(0)=\RR P^1$ is the space of vector lines in $\RR^2$ and 
$\tilde p\co\widetilde\RR^2\setminus\tilde p^{-1}(0)
\zfl\RR^2\setminus\{0\}$ a diffeomorphism.
Now one defines $\zQ\co M_s \zfl\widetilde\RR^2$ as follows:
\begin{enumerate}[label={\rm (\alph{*})}]
\item\mylabel{aa}
$\zQ(p(r,\zh))=\tilde p^{-1}(\zf(r,\zh))$ if $r\znoi 0$,
\item\mylabel{bb}
$\zQ(p(r,\zh))$ equals the vector line of $\RR^2$ spanned by $(cos\zh,sin\zh)$ if $r=0$.
\end{enumerate}

It is easily checked that $\zQ\co M_s \zfl\widetilde\RR^2$ is a diffeomorphism and
$\tilde p\zci\zQ=\bar\zf$. Therefore $\tilde p\co\widetilde \RR^2\zfl\RR^2$ and
$\bar\zf\co M_s \zfl\RR^2$ can be identified in this way. 
For sake of simplicity in what follows $\tilde p\co\widetilde \RR^2\zfl\RR^2$ will replaced by 
$\bar\zf\co M_s \zfl\RR^2$ in our computations. Thus if $X$ is a vector field on $\RR^2$
that vanishes at the origin, then $\widetilde X$ will be the single vector field on $M_s$ such
that $\bar\zf_* \widetilde X=X$.

On the other hand $X'$ will denote the pull-back by $p$ of $\widetilde X$. Clearly 
$\zf_* X'=X$. Moreover with respect to $X'$ the index of $\{0\}\zpor S^1$ and the number
of primary singularities included in it are twice those of $\bar\zf^{-1}(0)$ relative to
$\widetilde X$. 

As a consequence, in the case of $X_n$ it will suffice to show that 
$\Zb(X'_n )=\{0\}\zpor S^1$ contains $2n-2$ singularities of $X'_n$.

\subsubsection{Computation of the primary singularities of $X'_n$} \mylabel{exaPO2}
As $\zf\co(\RR\setminus\{0\})\zpor S^1 \zfl\RR^2 \setminus\{0\}$ is a covering space any 
vector field on $\RR^2 \setminus\{0\}$ can be lifted up. Denote by $\zpar'/\zpar x_k$, $k=1,2$,
the lifted vector field of $\zpar/\zpar x_k$. Then 
$${\frac {\zpar'} {\zpar x_1}}=cos\zh{\frac {\zpar} {\zpar r}}
-r^{-1}sin\zh{\frac {\zpar} {\zpar\zh}}                          
\hskip .5truecm \text{and}\hskip .5truecm
{\frac {\zpar'} {\zpar x_2}}=sin\zh{\frac {\zpar} {\zpar r}}
+r^{-1}cos\zh{\frac {\zpar} {\zpar\zh}}.$$

Since $(rcos\zh+irsin\zh)^n =r^n cos(n\zh)+ir^n sin(n\zh)$ one has
$P_n \zci\zf=r^n cos(n\zh)$ and $Q_n \zci\zf=r^n sin(n\zh)$. Observe that on
$(\RR\setminus\{0\})\zpor S^1$ the vector field $X'_n$ is the lifted one of $X_n$, so 
$X'_n =r^n cos(n\zh)\zpar'/\zpar x_1 +r^n sin(n\zh)\zpar'/\zpar x_2$. Finally, developing the
foregoing expression of $X'_n$ and extending it by continuity to $\RR\zpor S^1$ yields:
$$X'_n=r^{n-1}\zpizq rcos((n-1)\zh){\frac {\zpar} {\zpar r}}
+sin((n-1)\zh){\frac {\zpar} {\zpar\zh}}\zpder$$

The vector field $Y=rcos((n-1)\zh)\zpar/\zpar r+sin((n-1)\zh)\zpar/\zpar\zh$ 
tracks $X'_n$ with tracking  function $(n-1)cos((n-1)\zh)$. Therefore the set $S_a$ 
of primary singularities of $X'_n$ is included in $\{0\}\zpor T_n$ where 
$T_n \co=\{\zh\zpe S^1 \co sin((n-1)\zh)=0\}$. 

On the other hand, the order of $X'_n$ at the points of  
$\{0\}\zpor(S^1\setminus T_n)$ is $n-1$ and strictly greater than $n-1$ at the points
of $\{0\}\zpor T_n$. As $T_n$ is finite, more exactly it 
has $2n-2$ elements, Proposition \ref{pro-1} and Lemma     
\ref{lem-2} imply that all the points of $\{0\}\zpor T_n$ are primary singularities.
In short $S_a=\{0\}\zpor T_n$ and hence $\Zb(\widetilde X_n )=\tilde p^{-1}(0)$ 
contains $n-1$ primary singularities.

\subsubsection{The geometric meaning of the primary singularities of 
$\widetilde X_n$} \mylabel{exaPO3}
When $n\zmai 2$ the complex flow of $z^n \zpar/\zpar z$ is   
$$\zF(z,t)=z\zcizq(1-n)tz^{n-1}+1\zcder^{\frac {1} {1-n}}$$
with initial condition $\zF(z,0)=z$. 

(Fixed $z\znoi 0$ consider as domain of the variable $t$ the open set
$D_z \co=\CC\setminus R_z$ where 
$R_z \co=\{s(n-1)^{-1}z^{1-n}\co s\zpe[1,\zinf)\}$. Note that $D_z$ is star shaped
with respect to the origin. Since $D_z$ is simply connected, the initial condition
$\zF(z,0)=z$ defines a single continuous and hence holomorphic map
$\zF(z,\quad)\co D_z \zfl\CC$. Thus the apparent ambiguity introduced by the root
of order $n-1$ is eliminated.)

On the other hand considering, in the foregoing expression of $\zF$, 
real values of $t$ only and identifying $z$ with $(x_1 ,x_2 )$
yield the real flow of $X_n$. Therefore given
$(x_1 ,x_2 )\zpe\RR^2 \setminus\{0\}$ if $z^{n-1}=(x_1 +ix_2 )^{n-1}$ is not a real
number, its $X_n$-trajectory is defined for any $t\zpe\RR$ and has the origin both as 
$\za$ and $\zw$-limit.

On the contrary when $z^{n-1}=(x_1 +ix_2 )^{n-1}$ is a real number, 
the $X_n$-trajectory of $(x_1 ,x_2 )$, as set of points, equals the open 
half-line spanned by the vector $(x_1 ,x_2 )$ and
hence one of its limits is the origin and the other one the infinity.

It is easily checked that the set of $(x_1 ,x_2 )\zpe\RR^2$ such that 
$(x_1 +ix_2 )^{n-1}\zpe\RR$ consists of $n-1$ vector lines each of them
including two exceptional trajectories. These lines regarded as elements
of $\RR P^1 =\tilde p^{-1}(0)$ are the primary singularities of $\widetilde X_n$.

\vskip .3truecm

{\em AMS Subject Classification:} 20F16, 58J20, 37F75, 37O25, 54H25.

{\em Key words:} Vector field. Singularity. Zero set. Essential block. Surface.


\end{document}